\newtheorem{definition}{Definition}
\newtheorem{corollary}{Corollary}
\newtheorem{theorem}{Theorem}
\newtheorem*{theorem*}{Theorem}
\newtheorem{lemma}{Lemma}
\newtheorem{claim}{Claim}
\newtheorem*{claim*}{Claim}
\newtheorem{observation}{Observation}
\DeclareMathOperator{\tr}{tr}
\begin{document}
\title{A bound on the joint spectral radius using the diagonals}
\author{Vuong Bui\footnote{LIRMM, Universit\'e de Montpellier, CNRS, 161 Rue Ada, 34095 Montpellier, France and UET, Vietnam National University, Hanoi, 144 Xuan Thuy Street, Hanoi 100000, Vietnam  (\href{mailto://bui.vuong@yandex.ru}{\texttt{bui.vuong@yandex.ru}})}}
\date{}

\maketitle
\begin{abstract}
    The primary aim of this paper is to establish bounds on the joint spectral radius for a finite set of nonnegative matrices based on their diagonal elements.
 The efficacy of this approach is evaluated in comparison to existing and related
 results in the field. In particular, let $\Sigma$ be any finite set of $D\times D$ nonnegative matrices with the largest value $U$ and the smallest value $V$ over all positive entries. For each $i=1,\dots,D$, let $m_i$ be any number so that there exist $A_1,\dots,A_{m_i}\in\Sigma$ satisfying $(A_1\dots A_{m_i})_{i,i} > 0$, or let $m_i=1$ if there are no such matrices. We prove that the joint spectral radius $\rho(\Sigma)$ is bounded by
	\begin{equation*}
	\begin{multlined}
    		\max_i \sqrt[m_i]{\max_{A_1,\dots,A_{m_i}\in\Sigma} (A_1\dots A_{m_i})_{i,i}} \le \rho(\Sigma) \\
		\le \max_i \sqrt[m_i]{\left(\frac{UD}{V}\right)^{3D^2} \max_{A_1,\dots,A_{m_i}\in\Sigma} (A_1\dots A_{m_i})_{i,i}}.
	\end{multlined}
	\end{equation*}
	 
\end{abstract}

\section{Introduction}
The joint spectral radius is an extension of the ordinary spectral radius of a matrix to a set of matrices. It was first introduced in \cite{rota1960note} by Rota and Strang, and has since caught a lot of attention and become a popular topic in literature. Applications can be found in various engineering fields. The readers can check the book \cite{jungers2009joint} for a good reference for this subject.
The main result in this paper is an effective bound on the joint spectral radius of finite sets of nonnegative matrices. Before presenting it, we give some preliminary definitions.

Given a finite set $\Sigma$ of square matrices with the same dimension, we denote
\[
	\|\Sigma^n\|=\max_{A_1,\dots,A_n\in\Sigma} \|A_1\dots A_n\|,
\]
where $\|A\|$ denotes some norm for a matrix $A$. The article deals with finite sets only for the sake of computational aspects (and certain simplicity). It is left open whether similar results hold for infinite sets.

As the spectral radius of a matrix $A$, which is by definition the maximum of the absolute values of its eigenvalues, can be written in the form of Gelfand's formula as
\[
    \rho(A)=\lim_{n\to\infty} \sqrt[n]{\|A^n\|},
\]
the joint spectral radius of $\Sigma$ is then defined similarly to be the limit
\begin{equation} \label{eq:def-spectral-radius}
	\rho(\Sigma)=\lim_{n\to\infty} \sqrt[n]{\|\Sigma^n\|}.
\end{equation}
The joint spectral radius is also independent of the matrix norm of choice like the ordinary spectral radius. In the text, both radii are denoted by $\rho$, for which the radius to take depends on the argument (or one may simply relate $\rho(A)=\rho(\{A\})$).

The usual approach in proving the limit is to show that $\|\Sigma^{m+n}\|\le\|\Sigma^m\|\|\Sigma^n\|$ for a matrix norm. We remind that a matrix norm is a submultiplicative norm, that is $\|AB\|\le\|A\|\|B\|$ for any two matrices $A,B$ (for example, the Frobenius norm $\|A\|_F=\sqrt{\sum_i\sum_j |A_{i,j}|^2}$, the submultiplicativity is due to Cauchy--Schwarz inequality). As the sequence $\|\Sigma^n\|$ is submultiplicative by
\begin{equation*}
	\begin{multlined}
		\|\Sigma^{m+n}\| = \max_{A_1,\dots,A_{m+n}} \|A_1\dots A_{m+n}\| \le \max_{A_1,\dots, A_{m+n}} \|A_1\dots A_m\| \|A_{m+1}\dots A_{m+n}\| \\
		= \max_{A_1,\dots, A_m} \|A_1\dots A_m\| \max_{A_{m+1},\dots, A_{m+n}} \|A_{m+1}\dots A_{m+n}\| = \|\Sigma^m\| \|\Sigma^n\|,
	\end{multlined}
\end{equation*}
the limit of $\rho(\Sigma)$ exists by Fekete's lemma \cite{fekete1923verteilung}. A consequence is that $\rho(\Sigma)=\inf_n \sqrt[n]{\|\Sigma^n\|}$, that is $\|\Sigma^n\|\ge \rho(\Sigma)^n$ for every $n$.

However, the \emph{maximum norm} will be used throughout the text for convenience, that is $\|A\|=\max_{i,j} |A_{i,j}|$, for which the corresponding lower bound would be 
\begin{equation} \label{eq:lower-bound-maximum-norm}
	\|\Sigma^n\|\ge \frac{1}{D} \rho(\Sigma)^n,
\end{equation}
where $D\times D$ is the dimension of the matrices, since $\|A\|_F\le D\|A\|$ for any matrix $A$.

Beside $\|\Sigma^n\|$, we also denote
\[
	\|\Sigma^n\|_{i,j}=\max_{A_1,\dots,A_n\in\Sigma} |(A_1\dots A_n)_{i,j}|.
\]
The results in this paper address the case of \emph{finite sets of nonnegative matrices} only. The nonnegativity makes the norm to be the maximum entry of the matrix actually. 
A frequently used observation employing the nonnegativity is that 
\[
	\|\Sigma^m\|_{i,k}\|\Sigma^n\|_{k,j}\le \|\Sigma^{m+n}\|_{i,j}\le \sum_{k'} \|\Sigma^m\|_{i,k'}\|\Sigma^n\|_{k',j}\le D\|\Sigma^{m+n}\|_{i,j}
\]
for any $m,n,i,j,k$. (The easy verification is left to the readers.)

Throughout the paper, $\Sigma$ denotes a finite set of nonnegative matrices with dimension $D\times D$ and $U,V$ are respectively the largest value and the smallest value over all the positive entries of the matrices in $\Sigma$. We will \emph{not redefine} them (except only in Section \ref{sec:related}, we redefine $\Sigma$ when discussing related results for other kinds of matrix sets).

Regarding $U,V,D$, we would add an observation that will be used in several proofs: For every $\Delta>0$, every positive entry $x$ of a product of $\Delta$ matrices from $\Sigma$ satisfies\footnote{The readers can easily verify by induction.}
\[
	V^\Delta \le x \le \frac{1}{D} (UD)^\Delta.
\]
It follows that if $\rho(\Sigma)>0$ then 
\[
	V\le\rho(\Sigma)\le UD.
\]

We have introduced the necessary definitions to present the following main result of the paper.
\begin{theorem}\label{thm:jsr-bound}
	Let $m_i$ for each $i$ be any number so that $\|\Sigma^{m_i}\|_{i,i} > 0$, or set $m_i=1$ if there is no such $m_i$. We have
	\[
		\max_i \sqrt[m_i]{\|\Sigma^{m_i}\|_{i,i}} \le \rho(\Sigma) 
		\le \max_i \sqrt[m_i]{\left(\frac{UD}{V}\right)^{3D^2} \|\Sigma^{m_i}\|_{i,i}}.
	\]

\end{theorem}
We discuss how effective these bounds are in Section \ref{sec:related}.

Theorem \ref{thm:jsr-bound} relies on the following theorem, which is not exactly new and can be seen as another perspective on some known results, for which more details are given in Section \ref{sec:related}.
\begin{theorem}\label{thm:formula}
	The joint spectral radius can be written as
	\begin{equation}\label{eq:the-formula}
		\rho(\Sigma) = \sup_n\max_i\sqrt[n]{\|\Sigma^n\|_{i,i}}.
	\end{equation}
	Moreover, if $\rho(\Sigma)>0$, then there exist positive $\alpha,\beta,r$ so that for every $n$,
	\begin{equation}\label{eq:weak-bounds}
		\alpha \rho(\Sigma)^n\le \|\Sigma^n\|\le \beta n^r\rho(\Sigma)^n.
	\end{equation}
\end{theorem}

The main technique in the paper is the following definition of dependency graph.
\begin{definition}
	The \emph{dependency graph} of a set of matrices $\Sigma$ is a directed graph where the vertices are labeled as $1,\dots,D$, and there is an edge from $i$ to $j$ if and only if $A_{i,j}\ne 0$ for some matrix $A\in\Sigma$ (loops are allowed). Since the dependency graph is a directed graph, it can be decomposed into strongly connected components, for which we will call \emph{components} for short. If a component contains only one vertex without any loop, we call it a \emph{single} component. Otherwise, we call it a \emph{regular} component.
\end{definition}
As the dependency graph is the main graph in the article, vertices, paths and components may be mentioned without stating explicitly the dependency graph.
In a similar manner to $\|\Sigma^n\|_{i,j}$, we also denote 
\[
	\|\Sigma^n\|_C=\max_{i,j\in C} \|\Sigma^n\|_{i,j} 
\]
for a component $C$.

The next section discusses additional results and related findings, which the readers may skip if they prefer to focus on the proofs presented in subsequent sections.

\section{Related results and some discussions}
\label{sec:related}
In Theorem \ref{thm:jsr-bound}, although $U/V$ can be arbitrarily large, the appearance of $U$ and $V$ is essential. For example, let $\Sigma$ contain only one matrix 
\begin{equation*}
	A = \begin{bmatrix}
		\frac{1}{N} & 1\\
		1 & \frac{1}{N}
	\end{bmatrix}
\end{equation*}
where $N$ is a large number. As its square is
\begin{equation*}
A^2 = \begin{bmatrix}
  1+\frac{1}{N^2} & \frac{2}{N}\\
  \frac{2}{N} & 1+\frac{1}{N^2}
 \end{bmatrix},
\end{equation*}
the joint spectral radius is greater than $1$, since $\rho(\Sigma)\ge\sqrt{(A^2)_{1,1}}>1$. Setting $m_1=m_2=1$, we have $\sqrt[m_i]{\|\Sigma^{m_i}\|_{i,i}}=\sqrt[1]{\|\Sigma^1\|_{i,i}}=\frac{1}{N}$ for $i=1,2$. Therefore, the relation between $U$ and $V$ must present in the formula in some form.

A related bound in \cite{bui2022joint} has a similar form and is asymptotically equivalent (up to a constant) to the bound in Theorem \ref{thm:jsr-bound}. 
\begin{theorem}\label{thm:using-norm}
	If $\rho(\Sigma)>0$, then for every $n$,
	\[
		\sqrt[n]{\left(\frac{V}{UD}\right)^D \max_C \|\Sigma^n\|_C} \le \rho(\Sigma) \le \sqrt[n]{D \max_C \|\Sigma^n\|_C}.
	\]
\end{theorem}

The performance of the bound in Theorem \ref{thm:using-norm} is rather obvious. 
The ratio between the upper bound and the lower bound is $\sqrt[n]{D\left(\frac{UD}{V}\right)^D}$, for which the interval containing $\rho(\Sigma)$ gets smaller as $n$ increases.

The performance of the bound in Theorem \ref{thm:jsr-bound} is less obvious, although being asymptotically equivalent in the sense that the ratio between the bounds is also at most the root of a constant. At first, we can discard those $i$ with no positive $\|\Sigma^{m_i}\|_{i,i}$. We also make sure that the remaining $m_i$ have large enough values, say they are at least some $m$. Let us say $\max_i\sqrt[m_i]{\|\Sigma^{m_i}\|_{i,i}} = \sqrt[m_j]{\|\Sigma^{m_j}\|_{j,j}}$ and $\max_i \sqrt[m_i]{\left(\frac{UD}{V}\right)^{3D^2} \|\Sigma^{m_i}\|_{i,i}} = \sqrt[m_k]{\left(\frac{UD}{V}\right)^{3D^2} \|\Sigma^{m_k}\|_{k,k}}$, we have
\[
	\sqrt[m_k]{\|\Sigma^{m_k}\|_{k,k}}\le \sqrt[m_j]{\|\Sigma^{m_j}\|_{j,j}}\le \rho(\Sigma)\le \sqrt[m_k]{\left(\frac{UD}{V}\right)^{3D^2} \|\Sigma^{m_k}\|_{k,k}}.
\]
It follows that the ratio between the upper bound and the lower bound in Theorem \ref{thm:jsr-bound} is at most $\sqrt[m_k]{\left(\frac{UD}{V}\right)^{3D^2}}\le \sqrt[m]{\left(\frac{UD}{V}\right)^{3D^2}}$, which is the $m$-th root of a constant. The value of $m$ can be taken arbitrarily large as $\|\Sigma^{tm_i}\|>0$ for any $t\ge 1$. (We note that $m_i$ can be chosen to be the length of a cycle from $i$ to $i$ in the dependency graph.)

Note that $\|\Sigma^n\|_{i,i}$ is computed based on the computation of all $|\Sigma|^n$ combinations. However, it is still reasonable since the problem of approximating the joint spectral radius is $NP$-hard \cite{tsitsiklis1997lyapunov}. The theorem applies however very well if the set contains only one matrix, i.e. the case of the ordinary spectral radius.

In total, both approaches are asymptotically as effective as each other, though one can say that the bound in Theorem \ref{thm:using-norm} is still better by the $n$-th root of a constant.
Furthermore, comparing the bounds in Theorem \ref{thm:jsr-bound} and Theorem \ref{thm:using-norm} would be tricky as the former uses the diagonals while the latter uses the norms. The lower bound in the former is straightforward while the upper bound in the latter is straightforward. Moreover, the estimation of the constants for the latter one is easier, as in \cite{bui2022joint}. However, the former gives some neat formulae with some interesting corollaries (e.g. the proof of the joint spectral radius theorem in this article is simpler than that of \cite{bui2022joint}). 

In Section \ref{sec:duality}, we relate Theorems \ref{thm:jsr-bound} and \ref{thm:using-norm} by showing a way to deduce one theorem from the other using Lemma \ref{lem:not-too-different} (a key lemma in proving Theorem \ref{thm:jsr-bound} itself).

We review some results related to Theorem \ref{thm:formula}.
The first result is for the spectral radius of a (not necessarily nonnegative) matrix by Wimmer.
\begin{theorem*}[Wimmer \cite{wimmer1974spectral}]
	For any complex matrix $A$, we have
	\[
		\rho(A)=\limsup_{n\to\infty} \sqrt[n]{|\tr(A^n)|},
	\]
	where $\tr(A^n)$ denotes the trace of $A^n$.
\end{theorem*}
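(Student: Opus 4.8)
The plan is to reduce the statement to the eigenvalues of $A$ and then to control cancellation in the trace. Writing $\mu_1,\dots,\mu_d$ for the eigenvalues of $A$ listed with algebraic multiplicity, Schur triangularization (or the Jordan form) shows that the eigenvalues of $A^n$ are exactly $\mu_1^n,\dots,\mu_d^n$, so that
\[
    \tr(A^n)=\sum_{j=1}^d \mu_j^n.
\]
Set $\rho=\rho(A)=\max_j|\mu_j|$. For the upper bound I would simply estimate $|\tr(A^n)|\le\sum_j|\mu_j|^n\le d\,\rho^n$, whence $\sqrt[n]{|\tr(A^n)|}\le\sqrt[n]{d}\,\rho\to\rho$ and therefore $\limsup_{n\to\infty}\sqrt[n]{|\tr(A^n)|}\le\rho$. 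When $\rho=0$ the matrix is nilpotent, $\tr(A^n)=0$ for all $n\ge1$, and both sides vanish, so I may assume $\rho>0$.

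The substance lies in the reverse inequality, and the obstacle is that the summands $\mu_j^n$ may interfere destructively, making $\tr(A^n)$ small even when $\rho$ is large (as already happens for eigenvalues $\pm1$). I would isolate the dominant eigenvalues: grouping those $\mu_j$ with $|\mu_j|=\rho$ by their arguments, I write them as $\rho e^{i\theta_1},\dots,\rho e^{i\theta_s}$ with total multiplicities $c_1,\dots,c_s\ge1$, so that
\[
    \tr(A^n)=\rho^n\sum_{l=1}^s c_l\,e^{in\theta_l}+E_n, \qquad |E_n|\le C\,(\rho')^n
\]
for some $\rho'<\rho$ and $C>0$, the error term collecting the eigenvalues of strictly smaller modulus. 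It then suffices to produce a subsequence along which the dominant sum $\sum_l c_l\,e^{in\theta_l}$ stays bounded away from $0$, since then $\rho^n$ dominates $E_n$ and the trace cannot be small.

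This is where a simultaneous-approximation argument enters, and it is the step I expect to be the crux. I would show that there exist $n_k\to\infty$ with $n_k\theta_l\to0\pmod{2\pi}$ for every $l$ at once: partitioning the $s$-dimensional torus into finitely many cells of small diameter, pigeonhole forces two of the points $(n\theta_1,\dots,n\theta_s)$ into a common cell, and the difference of the corresponding indices returns the orbit near the origin; letting the diameter tend to $0$ yields the desired sequence (equivalently, one may invoke Kronecker's theorem or Weyl equidistribution). Along such $n_k$ each rotation realigns, so $\sum_l c_l\,e^{in_k\theta_l}\to\sum_l c_l>0$, and since $E_{n_k}=o(\rho^{n_k})$ one gets $|\tr(A^{n_k})|\ge\tfrac12\bigl(\sum_l c_l\bigr)\rho^{n_k}$ for large $k$. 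Taking $n_k$-th roots and passing to the limit gives $\limsup_{n\to\infty}\sqrt[n]{|\tr(A^n)|}\ge\rho$, which together with the upper bound completes the proof. The main difficulty is precisely guaranteeing that the factors $e^{in\theta_l}$ realign simultaneously, and it is exactly this phenomenon that forces the use of $\limsup$ rather than $\lim$ in the statement.
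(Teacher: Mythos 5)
The paper does not prove this statement at all: Wimmer's theorem is quoted purely as background, with the proof delegated to the citation \cite{wimmer1974spectral}, so there is no in-paper argument to compare against. Judged on its own, your proof is correct. The reduction $\tr(A^n)=\sum_j\mu_j^n$, the upper bound $|\tr(A^n)|\le d\rho^n$, and the isolation of the dominant eigenvalues are all fine, and the recurrence step is sound: the set $R_\varepsilon=\{n:\|n\theta_l\bmod 2\pi\|<\varepsilon\ \text{for all } l\}$ is nonempty by your pigeonhole argument and in fact contains arbitrarily large elements (if $n\in R_{\varepsilon/K}$ then $n,2n,\dots,Kn\in R_\varepsilon$), which is the one small point you should make explicit, since the pigeonhole differences by themselves are not guaranteed to tend to infinity; a diagonal extraction then gives the desired $n_k$. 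The positivity of the multiplicities $c_l$ is exactly what prevents cancellation in the limit $\sum_l c_l e^{in_k\theta_l}\to\sum_l c_l\ge 1$. For comparison, the classical route (essentially Wimmer's) avoids equidistribution entirely: the power sums $p_n=\sum_j\mu_j^n$ have generating function $\sum_{n\ge1}p_nt^n=\sum_j\frac{\mu_jt}{1-\mu_jt}$, whose poles at the distinct points $1/\mu_j$ cannot cancel because the residues are determined by the (positive integer) multiplicities; hence the radius of convergence is exactly $1/\rho(A)$ and Cauchy--Hadamard gives $\limsup_n|p_n|^{1/n}=\rho(A)$ in one stroke. Your argument is longer but more quantitative in spirit, as it exhibits explicit subsequences realizing the $\limsup$ and explains why a genuine limit cannot be expected in general.
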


There is a similar form for the joint spectral radius.
\begin{theorem*}[Chen and Zhou \cite{chen2000characterization}]
	For any finite set $\Sigma$ of complex matrices, we have
	\[
		\rho(\Sigma)=\limsup_{n\to\infty}\max_{A_1,\dots,A_n\in\Sigma} \sqrt[n]{|\tr(A_1\dots A_n)|}.
	\]
\end{theorem*}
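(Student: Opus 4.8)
The plan is to prove the two inequalities $\mu(\Sigma)\le\rho(\Sigma)$ and $\mu(\Sigma)\ge\rho(\Sigma)$ separately, where I abbreviate $\mu(\Sigma)=\limsup_{n\to\infty}\max_{A_1,\dots,A_n\in\Sigma}\sqrt[n]{|\tr(A_1\dots A_n)|}$. Both directions will be reduced to the two results stated just above, namely Wimmer's theorem and the joint spectral radius theorem, so that essentially no new machinery is required. Throughout, let $d$ be the common size of the matrices in $\Sigma$.

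For the upper bound, I would start from the elementary estimate that for a single $d\times d$ matrix $P$ with eigenvalues $\lambda_1,\dots,\lambda_d$ one has $|\tr(P)|=\bigl|\sum_j\lambda_j\bigr|\le\sum_j|\lambda_j|\le d\,\rho(P)$. Applying this to $P=A_1\dots A_n$ and maximizing gives $\max_{A_1,\dots,A_n}|\tr(A_1\dots A_n)|\le d\,\check\rho_n(\Sigma)$. Taking $n$-th roots, using $\sqrt[n]{d}\to1$, and passing to the $\limsup$ then yields $\mu(\Sigma)\le\limsup_{n\to\infty}\sqrt[n]{\check\rho_n(\Sigma)}=\check\rho(\Sigma)=\rho(\Sigma)$, where the last equality is precisely the joint spectral radius theorem.

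For the lower bound, the key point is that a single trace may suffer from cancellation, so I would amplify by passing to powers. For each $n$ pick a product $P_n=A_1\dots A_n$ attaining $\rho(P_n)=\check\rho_n(\Sigma)$. Since every power $P_n^k$ is itself a product of $nk$ matrices from $\Sigma$, Wimmer's theorem applied to the single matrix $P_n$ gives $\limsup_{k\to\infty}\sqrt[k]{|\tr(P_n^k)|}=\rho(P_n)$, and taking $n$-th roots, $\limsup_{k\to\infty}\sqrt[nk]{|\tr(P_n^k)|}=\rho(P_n)^{1/n}$. Because the $\limsup$ over all lengths dominates the $\limsup$ along the subsequence of lengths $nk$, this shows $\mu(\Sigma)\ge\rho(P_n)^{1/n}=\sqrt[n]{\check\rho_n(\Sigma)}$ for every $n$. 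Taking the $\limsup$ in $n$ gives $\mu(\Sigma)\ge\check\rho(\Sigma)=\rho(\Sigma)$, again by the joint spectral radius theorem, which completes the proof.

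The main obstacle is exactly this lower bound: without the power trick, the quantity $\max_{A_1,\dots,A_n}|\tr(A_1\dots A_n)|$ could be far smaller than $\check\rho_n(\Sigma)$ for individual $n$ because of phase cancellation among the eigenvalues, and indeed $\tr(P_n)$ may even vanish while $\rho(P_n)$ is large. Wimmer's theorem is what resolves this, since its proof hinges on the recurrence of the eigenvalue phases $\lambda_j/|\lambda_j|$ on the torus, guaranteeing that $|\tr(P_n^k)|$ recovers the full size $\rho(P_n)^k$ along a subsequence of exponents $k$. If one wanted a fully self-contained argument, replacing the appeal to Wimmer's theorem by this recurrence estimate would be the one remaining piece to supply.
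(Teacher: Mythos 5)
The paper does not prove this statement at all: it is quoted as background, with the proof delegated to the cited reference \cite{chen2000characterization}, so there is no internal argument to compare yours against. On its own terms your derivation is correct. The upper bound via $|\tr(P)|\le\sum_j|\lambda_j|\le d\,\rho(P)$ is standard, and your lower bound correctly handles the real difficulty (phase cancellation in the trace) by amplifying with powers: since $P_n^k$ is a product of $nk$ matrices of $\Sigma$, Wimmer's theorem gives $\mu(\Sigma)\ge\limsup_k\sqrt[nk]{|\tr(P_n^k)|}=\rho(P_n)^{1/n}=\sqrt[n]{\check\rho_n(\Sigma)}$, and the subsequence comparison with the full $\limsup$ is legitimate. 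Both directions then close via the Berger--Wang equality $\check\rho(\Sigma)=\rho(\Sigma)$, which is available since a finite set is bounded, and there is no circularity because neither Wimmer's theorem nor the joint spectral radius theorem depends on the Chen--Zhou characterization. The one remark worth making is that your appeal to Wimmer's theorem can be replaced by the quantitative lemma (via Newton's identities) that for a $d\times d$ matrix $P$ one has $\max_{1\le k\le d}|\tr(P^k)|^{1/k}\ge c_d\,\rho(P)$ with $c_d>0$ depending only on $d$; this bounds the power $k$ uniformly by $d$ instead of passing to a $\limsup$ in $k$, and is closer to how such trace characterizations are usually proved self-containedly. As written, your argument is sound but inherits whatever machinery sits inside the two quoted theorems, which is a heavier toolkit than the combinatorial methods this paper develops for the nonnegative case.
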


We note here that the formula \eqref{eq:the-formula} of $\rho(\Sigma)$ in Theorem \ref{thm:formula} is itself not completely new. Indeed, since for any nonnegative matrices $A_1,\dots,A_n$ we have
\[
    \max_i (A_1\dots A_n)_{i,i}\le \tr(A_1\dots A_n) \le D \max_i (A_1\dots A_n)_{i,i},
\]
it follows from the characterization of Chen and Zhou that
\begin{equation} \label{eq:upper=lower}
    \begin{multlined}
        \limsup_{n\to\infty}\max_{A_1,\dots,A_n\in\Sigma} \sqrt[n]{\max_i (A_1\dots A_n)_{i,i}}\le \rho(\Sigma)\\
        \le \limsup_{n\to\infty}\max_{A_1,\dots,A_n\in\Sigma} \sqrt[n]{D \max_i (A_1\dots A_n)_{i,i}}.
    \end{multlined}
\end{equation}
It means 
\begin{align*}
    \rho(\Sigma)&=\limsup_{n\to\infty}\max_{A_1,\dots,A_n\in\Sigma} \sqrt[n]{\max_i (A_1\dots A_n)_{i,i}}\\
    &=\sup_{n\to\infty}\max_{A_1,\dots,A_n\in\Sigma} \sqrt[n]{\max_i (A_1\dots A_n)_{i,i}}.
\end{align*}
The former equality is due to the lower bound and the upper bound of $\rho(\Sigma)$ in \eqref{eq:upper=lower} being identical since $D$ is a constant and does not affect the asymptotic behavior when $n\to\infty$.
We would explain the latter equality by the following observation, which applies to other places in the text as well: If a nonnegative sequence $x_n$ satisfies $x_{tm}\ge (x_m)^t$ for every $t,m$, then 
\begin{equation} \label{eq:limsup=sup}
    \limsup_{n\to\infty} \sqrt[n]{x_n}=\sup_n \sqrt[n]{x_n}.
\end{equation}
    It suffices to prove $\limsup_{n\to\infty} \sqrt[n]{x_n} \ge \sup_n \sqrt[n]{x_n}$ as the other direction is obvious: 
    For every $m$, we have $\sqrt[m]{x_m}\le \limsup_{n\to\infty} \sqrt[n]{x_n}$ due to the subsequence $\sqrt[tm]{x_{tm}}\ge \sqrt[m]{x_m}$ for $t=1,2,\dots$. It follows that $\sup_m \sqrt[m]{x_m}\le \limsup \sqrt[n]{x_n}$. Equation \eqref{eq:limsup=sup} follows.
    
We also note that the bound \eqref{eq:weak-bounds} of $\|\Sigma^n\|$ in Theorem \ref{thm:formula} is not new either (and not used in the proof of Theorem \ref{thm:jsr-bound}). It is well known in literature, even for bounded sets of complex matrices, e.g. see \cite{bell2005gap}. For finite sets of nonnegative matrices, a stronger bound is already shown in \cite{bui2022joint}: If $\rho(\Sigma)>0$, there exist positive $\alpha,\beta$ and a nonnegative integer $r$ so that for every $n$,
\[
	\alpha n^r \rho(\Sigma)^n\le \|\Sigma^n\|\le \beta n^r\rho(\Sigma)^n.
\]

Although the content of Theorem \ref{thm:formula} is not exactly new as discussed and it can be safely discarded without really affecting Theorem \ref{thm:jsr-bound}, the point of proving it again by combining the formula and the bound\footnote{We note that only the formula \eqref{eq:the-formula} of Theorem \ref{eq:the-formula} is used in the proof of Theorem \ref{thm:jsr-bound}, while the bound \eqref{eq:weak-bounds} of Theorem \ref{thm:jsr-bound} is given as an extra fact.} is that they can be proved together at the same time and in a simple way (about two or three pages without relying on any result). The proof uses a kind of double induction, which may be interesting on its own. 

Moreover, the formula \eqref{eq:the-formula} of Theorem \ref{thm:formula} and the joint spectral radius theorem for finite sets of nonnegative matrices are closely related as follows. Let us first remind the theorem in the original form.
The readers can check \cite[Theorem $2.3$]{jungers2009joint} for an exposition, or \cite{berger1992bounded} for the first time it was proved.

\begin{theorem*}[The joint spectral radius theorem]
	For every bounded set $\Sigma$ of complex matrices, let 
	\[
		\check\rho(\Sigma)=\limsup_{n\to\infty} \sqrt[n]{\check\rho_n(\Sigma)}
	\]
	where $\check\rho_n(\Sigma)=\max_{A_1,\dots,A_n\in\Sigma} \rho(A_1\dots A_n)$, we have 
	\[
		\check\rho(\Sigma)=\rho(\Sigma).
	\]
\end{theorem*}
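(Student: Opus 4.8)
The plan is to prove the two inequalities $\check\rho(\Sigma)\le\rho(\Sigma)$ and $\rho(\Sigma)\le\check\rho(\Sigma)$ separately, restricting attention to finite nonnegative sets $\Sigma$ (the setting in which the combinatorial formula of the abstract is available), and to lean on that formula for the harder of the two directions. Throughout I would use the maximum norm $\|\cdot\|$ already fixed in the text.

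For the inequality $\check\rho(\Sigma)\le\rho(\Sigma)$ I would control the spectral radius of a single matrix by its largest entry. If $B$ has dimension $d$ then $\rho(B)\le\|B\|_\infty\le d\|B\|$, where $\|B\|_\infty$ is the maximum row sum: the first bound holds since $\rho$ does not exceed any induced operator norm, and the second since each row has $d$ entries. Applying this to an arbitrary product $B=A_1\cdots A_n$ gives $\check\rho_n(\Sigma)\le d\,\|\Sigma^n\|$, and taking $n$-th roots, together with $\sqrt[n]{d}\to 1$ and the existence of the limit defining $\rho(\Sigma)$, yields $\check\rho(\Sigma)=\limsup_n\sqrt[n]{\check\rho_n(\Sigma)}\le\rho(\Sigma)$.

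The substance of the argument is the reverse inequality $\rho(\Sigma)\le\check\rho(\Sigma)$, where the combinatorial formula does the work. The key observation is that for a nonnegative matrix $B$ and any index $i$ one has $(B^k)_{i,i}\ge (B_{i,i})^k$, since the closed walk repeating the self-loop at $i$ contributes the term $(B_{i,i})^k$ to the nonnegative sum $(B^k)_{i,i}$; letting $k\to\infty$ gives $\rho(B)\ge B_{i,i}$. Now fix $\varepsilon>0$. By the formula $\rho(\Sigma)=\sup_n\max_i\sqrt[n]{\max_{A_1,\dots,A_n\in\Sigma}(A_1\cdots A_n)_{i,i}}$ there exist a length $n$, an index $i$, and a product $B=A_1\cdots A_n$ with $B_{i,i}>(\rho(\Sigma)-\varepsilon)^n$, whence $\rho(B)>(\rho(\Sigma)-\varepsilon)^n$.

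The remaining difficulty, which I expect to be the main obstacle, is that this exhibits only one good length $n$, whereas $\check\rho(\Sigma)$ is a limit superior; the supremum in the formula must be upgraded into a statement about infinitely many lengths. I would resolve this by powering: for every $k$ the matrix $B^k$ is a product of $nk$ elements of $\Sigma$ and satisfies $\rho(B^k)=\rho(B)^k$, so $\check\rho_{nk}(\Sigma)\ge\rho(B)^k$ and therefore $\sqrt[nk]{\check\rho_{nk}(\Sigma)}\ge\sqrt[n]{\rho(B)}>\rho(\Sigma)-\varepsilon$ for all $k$. This constant lower bound along the subsequence of lengths $nk$ forces $\check\rho(\Sigma)\ge\rho(\Sigma)-\varepsilon$, and letting $\varepsilon\to 0$ completes the proof. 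The multiplicativity $\rho(B^k)=\rho(B)^k$ is precisely the tool that bridges the supremum of the combinatorial formula and the limit superior in the definition of $\check\rho$.
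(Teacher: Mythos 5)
Your proof is correct and follows essentially the same route as the paper: both directions ultimately rest on the formula of Theorem~\ref{thm:formula}, your bound $\rho(B)\ge B_{i,i}$ is the $t=1$ instance of the single-matrix formula \eqref{eq:single-matrix} used in \eqref{eq:joint-generalized}, and your powering argument $\check\rho_{nk}(\Sigma)\ge\rho(B)^k$ is exactly the supermultiplicativity observation \eqref{eq:limsup=sup} that the paper invokes to reconcile the supremum with the limit superior. The only cosmetic difference is that you split the chain of equalities in \eqref{eq:joint-generalized} into two inequalities and dispatch the easy direction with the operator-norm bound $\rho(B)\le d\|B\|$ instead of the diagonal formula.
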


The quantity $\check\rho(\Sigma)$ is called the \emph{generalized spectral radius}, which may be different from the joint spectral radius in case of an unbounded set of matrices. 

On one hand, suppose we already have the joint spectral radius theorem, in order to prove the formula \eqref{eq:the-formula} of Theorem \ref{thm:formula}, it suffices to prove \eqref{eq:the-formula} for only a nonnegative matrix $A$, that is
\begin{equation} \label{eq:single-matrix}
	\rho(A)=\sup_n\max_i\sqrt[n]{(A^n)_{i,i}},
\end{equation}
and then apply the joint spectral radius theorem to $\Sigma$. Indeed, $\rho(\Sigma)=\check\rho(\Sigma)$ while $\check\rho(\Sigma)$ can be written as

\begin{equation} \label{eq:joint-generalized}
	\begin{split}
		\check\rho(\Sigma)&=\limsup_{n\to\infty} \sqrt[n]{\check\rho_n(\Sigma)}\\
        &=\sup_n \sqrt[n]{\check\rho_n(\Sigma)}\\
        &= \sup_n\max_{A_1,\dots,A_n\in\Sigma} \sqrt[n]{\rho(A_1\dots A_n)} \\
		&= \sup_n\max_{A_1,\dots,A_n\in\Sigma}\sqrt[n]{\sup_t\max_i \sqrt[t]{[(A_1\dots A_n)^t]_{i,i}}} \\
		&= \max_i\sup_t\sup_n \max_{A_1,\dots,A_n\in\Sigma}\sqrt[tn]{[(A_1\dots A_n)^t]_{i,i}} \\
		&=\max_i\sup_n\max_{A_1,\dots,A_n\in\Sigma} \sqrt[n]{(A_1\dots A_n)_{i,i}} \\
        &=\sup_n\max_i\sqrt[n]{\|\Sigma^n\|_{i,i}}.
	\end{split}
\end{equation}
Note that the second equality is due to \eqref{eq:limsup=sup} since we have $\check\rho_{tm}(\Sigma)\ge (\check\rho_{m}(\Sigma))^t$ for any $t,m$, which is verified by
\begin{align*}
(\check\rho_m(\Sigma))^t&=\left(\max_{A_1,\dots,A_m\in\Sigma} \rho(A_1\dots A_m)\right)^t\\
&= \left(\max_{A_1,\dots,A_m\in\Sigma} \lim_{k\to\infty} \sqrt[k]{\|(A_1\dots A_m)^k\|}\right)^t\\
&= \left(\max_{A_1,\dots,A_m\in\Sigma} \lim_{k'\to\infty} \sqrt[k't]{\|(A_1\dots A_m)^{k't}\|}\right)^t\\
&\le \max_{B_1,\dots,B_{tm}\in\Sigma} \lim_{k'\to\infty} \sqrt[k']{\|(B_1\dots B_{tm})^{k'}\|}\\
&=\check\rho_{tm}(\Sigma).
\end{align*}
The forth equality is by \eqref{eq:single-matrix}. We can verify the sixth equality: By considering $t=1$, we obtain
\[
    \max_i\sup_t\sup_n \max_{A_1,\dots,A_n\in\Sigma}\sqrt[tn]{[(A_1\dots A_n)^t]_{i,i}} \ge \max_i\sup_n\max_{A_1,\dots,A_n\in\Sigma} \sqrt[n]{(A_1\dots A_n)_{i,i}},
\]
while the other direction of the above inequality is due to for every $t,n$,
\[
 	\max_{A_1,\dots,A_n\in\Sigma}\sqrt[tn]{[(A_1\dots A_n)^t]_{i,i}} \le \max_{B_1,\dots,B_{tn}\in\Sigma}\sqrt[tn]{(B_1\dots B_{tn})_{i,i}}.
\]

Although the formula \eqref{eq:the-formula} of Theorem \ref{thm:formula} (for a set of matrices) can be reduced to \eqref{eq:single-matrix}, which is \eqref{eq:the-formula} for a single matrix, proving Theorem \ref{thm:formula} for a set of matrices is not harder than proving for a single matrix. Therefore, we provide the full proof without relying on the joint spectral radius theorem.

On the other hand, suppose we have the formula \eqref{eq:the-formula} of Theorem \ref{thm:formula} in the first place. This leads to a simple proof for the joint spectral radius theorem for finite sets of nonnegative matrices, since it follows from \eqref{eq:joint-generalized} and \eqref{eq:the-formula} that
\[
    \check\rho(\Sigma)=\sup_n\max_i\sqrt[n]{\|\Sigma^n\|_{i,i}}=\rho(\Sigma).
\]
Note that \eqref{eq:joint-generalized} depends on \eqref{eq:single-matrix}, which is a special case of \eqref{eq:the-formula}. However, \eqref{eq:joint-generalized} is independent of the joint spectral radius theorem.
\begin{corollary}
	The joint spectral radius theorem holds for finite sets of nonnegative matrices.
\end{corollary}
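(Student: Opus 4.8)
The plan is to establish $\check\rho(\Sigma)=\rho(\Sigma)$ by verifying the chain of equalities announced in \eqref{eq:joint-generalized}, which is now fully available since Theorem~\ref{thm:formula} has been proved. The strategy is to rewrite $\check\rho(\Sigma)$ step by step until it coincides with $\sup_n\max_i\sqrt[n]{\|\Sigma^n\|_{i,i}}$, and then to invoke Theorem~\ref{thm:formula} to identify this quantity with $\rho(\Sigma)$.

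First I would start from the definition $\check\rho(\Sigma)=\limsup_{n\to\infty}\sqrt[n]{\check\rho_n(\Sigma)}$ and replace the $\limsup$ by a $\sup$ using \eqref{eq:limsup=sup}. For this I would check the hypothesis $\check\rho_{tm}(\Sigma)\ge(\check\rho_m(\Sigma))^t$: taking an optimal product $B=A_1\dots A_m$ for $\check\rho_m(\Sigma)$, its power $B^t$ is a product of $tm$ matrices from $\Sigma$ with $\rho(B^t)=\rho(B)^t$, so $\check\rho_{tm}(\Sigma)\ge\rho(B^t)=(\check\rho_m(\Sigma))^t$. This gives the second equality of \eqref{eq:joint-generalized}.

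Next I would apply the single-matrix case of Theorem~\ref{thm:formula}---legitimate since any product $B=A_1\dots A_n$ of matrices from $\Sigma$ is again nonnegative---namely \eqref{eq:single-matrix} for the singleton $\{B\}$, to obtain $\rho(A_1\dots A_n)=\sup_t\max_i\sqrt[t]{[(A_1\dots A_n)^t]_{i,i}}$. The remaining and most delicate step is collapsing the resulting nested suprema into a single expression: since $(A_1\dots A_n)^t$ is itself a product of $nt$ matrices drawn from $\Sigma$, the inner supremum over $t$ becomes redundant once the outer supremum over the length and over all choices of factors is taken. Concretely I would prove the two inequalities separately---the term $t=1$ already realizes $\sup_n\max_i\sqrt[n]{\|\Sigma^n\|_{i,i}}$ as a lower bound, while every $(nt)$-fold product appearing for larger $t$ is subsumed by the outer supremum, yielding the matching upper bound---being careful that the radicals combine as $\sqrt[n]{\sqrt[t]{\cdot}}=\sqrt[nt]{\cdot}$.

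Finally, having arrived at $\check\rho(\Sigma)=\sup_n\max_i\sqrt[n]{\|\Sigma^n\|_{i,i}}$, I would close the argument by citing Theorem~\ref{thm:formula}, whose formula asserts precisely that the right-hand side equals $\rho(\Sigma)$. Hence $\check\rho(\Sigma)=\rho(\Sigma)$, the joint spectral radius theorem for finite sets of nonnegative matrices. The only genuine obstacle is the bookkeeping in the supremum-collapsing step; everything else is a direct unwinding of definitions together with the already-established Theorem~\ref{thm:formula}.
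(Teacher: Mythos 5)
Your proposal is correct and follows essentially the same route as the paper: the paper's proof of this corollary is exactly the chain \eqref{eq:joint-generalized} (limsup-to-sup via \eqref{eq:limsup=sup}, the single-matrix formula \eqref{eq:single-matrix} applied to each product, and the collapse of the nested suprema using $\sqrt[n]{\sqrt[t]{\cdot}}=\sqrt[nt]{\cdot}$), concluded by identifying the final expression with $\rho(\Sigma)$ via Theorem~\ref{thm:formula}. The details you fill in (checking $\check\rho_{tm}(\Sigma)\ge(\check\rho_m(\Sigma))^t$ and the two-inequality argument for the collapse) are exactly the ones the paper leaves implicit.
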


In comparison to the standard proof of the joint spectral radius theorem by Elsner in \cite{elsner1995generalized}, which makes use of analytic geometric tools, our combinatorial approach seems to be more elementary and asks for less background on the subject. However, we should emphasize here that our approach works for only finite sets of nonnegative matrices.

Following the development of Wimmer's and Chen and Zhou's results, Xu has attempted to turn the limit superior in the theorem of Chen and Zhou to a limit for nonnegative matrices with the condition that one of them is primitive. Note that a matrix $A$ is primitive if $A^n>0$ for some $n\ge 1$.
\begin{theorem*}[Xu \cite{xu2010trace}]
	For any finite set of nonnegative matrices $\Sigma$ with at least one primitive matrix, we have
	\[
		\rho(\Sigma)=\lim_{n\to\infty}\max_{A_1,\dots,A_n\in\Sigma} \sqrt[n]{\tr(A_1\dots A_n)}.
	\]
\end{theorem*}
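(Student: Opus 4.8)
The plan is to derive Xu's theorem as one more corollary of Theorem~\ref{thm:formula}, using both of its halves: the formula $\rho(\Sigma)=\sup_n\max_i\sqrt[n]{\|\Sigma^n\|_{i,i}}$ and the upper bound $\|\Sigma^n\|\le\const\,n^r\rho(\Sigma)^n$. Write $\sigma_n=\max_{A_1,\dots,A_n\in\Sigma}\tr(A_1\dots A_n)$; since the matrices are nonnegative, $\sigma_n\ge0$ and $|\tr|=\tr$, so the claim is that $\sqrt[n]{\sigma_n}$ converges to $\rho:=\rho(\Sigma)$. First I would note that a primitive $P\in\Sigma$ has $P^k>0$ entrywise for some $k$, whence $\rho\ge\rho(P)>0$, so we may assume $\rho>0$. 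As $\liminf\le\limsup$ always, it suffices to prove $\limsup_n\sqrt[n]{\sigma_n}\le\rho$ and $\liminf_n\sqrt[n]{\sigma_n}\ge\rho$ separately. The upper bound is immediate from Theorem~\ref{thm:formula}: for $d\times d$ matrices, $\tr(A_1\dots A_n)=\sum_i(A_1\dots A_n)_{i,i}\le d\,\|A_1\dots A_n\|\le d\,\|\Sigma^n\|\le\const\,n^r\rho^n$, so $\sqrt[n]{\sigma_n}\le\sqrt[n]{\const\,n^r}\,\rho\to\rho$.

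The crux is the lower bound, and this is exactly where primitivity earns its keep: it is the device that upgrades a statement about a subsequence of lengths (a $\limsup$) to one about all lengths (a genuine $\lim$). Fix $\epsilon\in(0,\rho)$. By the formula in Theorem~\ref{thm:formula} there are a length $m$, an index $i$, and a product $C=C_1\dots C_m$ with $(C)_{i,i}=\|\Sigma^m\|_{i,i}>(\rho-\epsilon)^m$. Looping at $i$ gives $(C^t)_{i,i}\ge(C)_{i,i}^t$, so $\sigma_{mt}\ge(\rho-\epsilon)^{mt}$ along the progression $n=mt$; the task is to fill in the other residues modulo $m$. Here I would pad with the primitive matrix. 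Since $P^s>0$ for every $s\ge k$, for any $n\ge k+m$ I pick $s\in\{k,\dots,k+m-1\}$ with $s\equiv n\pmod m$ and set $t=(n-s)/m\ge0$, so $X=C^tP^s$ is a product of exactly $n$ matrices from $\Sigma$. Keeping the $j=i$ term in $(C^tP^s)_{i,i}=\sum_j(C^t)_{i,j}(P^s)_{j,i}$ yields
\[
\tr(X)\ge(C^t)_{i,i}(P^s)_{i,i}\ge(\rho-\epsilon)^{mt}(P^s)_{i,i}=(\rho-\epsilon)^{n}\,(\rho-\epsilon)^{-s}(P^s)_{i,i}.
\]
As $s$ runs over a finite set on which $(P^s)_{i,i}>0$ and $(\rho-\epsilon)^{-s}$ is bounded, the factor $(\rho-\epsilon)^{-s}(P^s)_{i,i}$ exceeds some $\const>0$ independent of $n$. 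Hence $\sigma_n\ge\tr(X)\ge\const\,(\rho-\epsilon)^n$ for all $n\ge k+m$, giving $\liminf_n\sqrt[n]{\sigma_n}\ge\rho-\epsilon$; letting $\epsilon\to0$ completes the proof.

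I expect the only delicate point to be the bookkeeping in the padding step. One must verify that $P^s>0$ holds for \emph{all} $s\ge k$, not just $s=k$: this follows because $P^k>0$ forces every row of $P$ to be nonzero, so $(P^{s+1})_{i,j}=\sum_l P_{i,l}(P^s)_{l,j}>0$ by induction. One then checks that the chosen $s$ and $t$ are admissible for every $n\ge k+m$ and that the constant absorbing $(\rho-\epsilon)^{-s}(P^s)_{i,i}$ truly does not depend on $n$. Everything else is a direct appeal to the already-established Theorem~\ref{thm:formula}, so Xu's theorem slots into the combinatorial framework without recourse to the characterization of Chen and Zhou.
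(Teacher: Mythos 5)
Your proof is correct, but it takes a genuinely different route from the paper's. The paper does not prove Xu's theorem directly: it derives it as the special case $\Delta=1$ of the more general Proposition in Section~\ref{sec:relation-xu-result}, whose engine is Lemma~\ref{lem:nonnegative-fekete} (the nonnegative extension of Fekete's lemma) applied to each supermultiplicative diagonal sequence $\|\Sigma^n\|_{i,i}$, combined with the numerical-semigroup fact that all sufficiently large multiples of $\delta_i=\gcd\{n:\|\Sigma^n\|_{i,i}>0\}$ are realized; primitivity enters only to force every $\delta_i$ to equal $1$. You instead run an explicit $\epsilon$-argument: you take a near-optimal diagonal cycle $C$ of length $m$ at some index $i$ and pad it with powers $P^s$ of the primitive matrix to hit every length $n$, using $P^s>0$ for all $s\ge k$ to keep the $(i,i)$ entry positive. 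This uses primitivity head-on and bypasses both Lemma~\ref{lem:nonnegative-fekete} and the gcd analysis; for the $\limsup$ direction you also invoke the polynomial bound $\|\Sigma^n\|\le\const n^r\rho^n$ from Theorem~\ref{thm:formula}, where the paper needs only the sup formula (via $\tr(A_1\dots A_n)\le d\max_i(A_1\dots A_n)_{i,i}\le d\,\rho^n$). The trade-off: your argument is shorter and self-contained for Xu's statement as literally given, while the paper's machinery is what permits relaxing primitivity to the condition on the $\delta_i$'s, where no primitive matrix is available to pad with. All the delicate points you flag (that $P^s>0$ persists for $s\ge k$, that $s$ and $t=(n-s)/m$ are admissible for $n\ge k+m$, and that the constant $\min_s(\rho-\epsilon)^{-s}(P^s)_{i,i}$ is independent of $n$) check out.
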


Using the formula in Theorem \ref{thm:formula}, we can extend Xu's result to the following theorem, by showing that the conclusion still holds with a more relaxed condition than the primitivity of at least one matrix.

\begin{theorem} \label{thm:extension-xu}
	Given a finite set of nonnegative matrices $\Sigma$, for each $i$, denote
	\[
		\delta_i=\gcd\{n: \|\Sigma^n\|_{i,i}>0\},
	\]
	or we set $\delta_i=1$ in case the set is empty.
	Let $\Delta$ be a multiple of all $\delta_i$. We have
	\[
		\rho(\Sigma)=\lim_{m\to\infty}\max_i \sqrt[m\Delta]{\|\Sigma^{m\Delta}\|_{i,i}} = \lim_{m\to\infty} \max_{A_1,\dots,A_{m\Delta}} \sqrt[m\Delta]{\tr(A_1\dots A_{m\Delta})}.
	\]
\end{theorem}
Note that a matrix $A$ is primitive if and only if it is irreducible, i.e. all $\delta_i$ corresponding to $\Sigma=\{A\}$ are equal to $1$. If such a matrix $A$ is an element of $\Sigma$, we still have the same values $\delta_i=1$ for all $i$, for which we have the conclusion for $\Delta=1$, i.e. Xu's result.

In fact, the proof of Theorem \ref{thm:extension-xu} in Section \ref{sec:extension-xu} can be seen as a corollary of Theorem \ref{thm:formula}, with the support of a variant of Fekete's lemma for nonnegative sequences, which might be interesting on its own. 

\section{Proof of Theorem \ref{thm:jsr-bound}}
\label{sec:jsr-bound}
To prove Theorem \ref{thm:jsr-bound}, we need the following key lemma, which is fairly technical, and will be proved in Section \ref{sec:key-lemma}. The lemma is itself used to relate Theorems \ref{thm:jsr-bound} and \ref{thm:using-norm} in Section \ref{sec:duality}.

\begin{lemma}
\label{lem:not-too-different}
For any index $i$, if $m,n$ are two positive integers whose difference is bounded, then either $\|\Sigma^n\|_{i,i}=0$ or the ratio $\|\Sigma^m\|_{i,i}/\|\Sigma^n\|_{i,i}$ is bounded. In particular, if $m\ge n$, then the bound can be set to
\[
	(UD)^{m-n} \left(\frac{UD}{V}\right)^{3D^2-2D+1}.
\] 
\end{lemma}
We are interested in an explicit constant only for the case $m\ge n$ since it will be applied in the proof of Theorem \ref{thm:jsr-bound} as follows. When $m<n$, we do not need the boundedness of the ratio for other results, but still prove it as an interesting fact.

\begin{proof}[Proof of Theorem \ref{thm:jsr-bound}]
	The lower bound is obvious, we prove the upper bound.

	When we consider $\|\Sigma^n\|_C$ for a component $C$ instead of considering $\|\Sigma^n\|$, we are actually considering the problem reduced to $C$ in the sense that we remove all the dimensions not in $C$. This is still a problem that satisfies all the results of the original problem. Therefore, the limit 
	\[
		\rho_C(\Sigma) = \lim_{n\to\infty} \sqrt[n]{\|\Sigma^n\|_C}
	\]
	can be written by Theorem \ref{thm:formula} as
	\[
		\rho_C(\Sigma) = \sup_n \max_{i\in C} \sqrt[n]{\|\Sigma^n\|_{i,i}},
	\]
	which means
	\[
		\rho(\Sigma)=\max_C \rho_C(\Sigma).
	\]

	Consider a regular component $C$ and any $i\in C$, by \eqref{eq:lower-bound-maximum-norm} (in the introduction) we have
	\[
		\rho_C(\Sigma)^{m_i}\le D \|\Sigma^{m_i}\|_C.
	\]
	Suppose $\|\Sigma^{m_i}\|_C=\|\Sigma^{m_i}\|_{j,k}$. Let $\delta_1$ be the distance from $i$ to $j$ and $\delta_2$ be the distance from $k$ to $i$, that is $\|\Sigma^{\delta_1}\|_{i,j}$ and $\|\Sigma^{\delta_2}\|_{k,i}$ are both nonzero (when $i=j$, we have $\delta_1=0$, we then assume that $\|\Sigma^0\|_{i,j}=1$, and similarly for $j=k$ with $\delta_2=0$ and $\|\Sigma^0\|_{k,i}=1$). We have
	\begin{equation*}
    	\begin{multlined}
    		\|\Sigma^{m_i}\|_C = \frac{1}{\|\Sigma^{\delta_1}\|_{i,j} \|\Sigma^{\delta_2}\|_{k,i}} \|\Sigma^{\delta_1}\|_{i,j}\|\Sigma^{m_i}\|_{j,k} \|\Sigma^{\delta_2}\|_{k,i}\\
		\le \frac{1}{V^{\delta_1+\delta_2}} \|\Sigma^{m_i+\delta_1+\delta_2}\|_{i,i} \le \frac{1}{V^{\delta_1+\delta_2}} (UD)^{\delta_1+\delta_2} \left(\frac{UD}{V}\right)^{3D^2-2D+1} \|\Sigma^{m_i}\|_{i,i}\\
		= \left(\frac{UD}{V}\right)^{3D^2-2D+1+\delta_1+\delta_2} \|\Sigma^{m_i}\|_{i,i} \le \left(\frac{UD}{V}\right)^{3D^2-1} \|\Sigma^{m_i}\|_{i,i},
    	\end{multlined}
	\end{equation*}
	where Lemma \ref{lem:not-too-different} is used in the step bounding $\|\Sigma^{m_i+\delta_1+\delta_2}\|_{i,i}$ by a constant times $\|\Sigma^{m_i}\|_{i,i}$. (Note that $3D^2-2D+1+\delta_1+\delta_2\le 3D^2-1$ is due to $\delta_1\le D-1, \delta_2\le D-1$.)

	In total, 
	\[
		\rho_C(\Sigma)\le\sqrt[m_i]{D\left(\frac{UD}{V}\right)^{3D^2-1} \|\Sigma^{m_i}\|_{i,i}}\le \sqrt[m_i]{\left(\frac{UD}{V}\right)^{3D^2} \|\Sigma^{m_i}\|_{i,i}}.
	\]

	When $C$ is single, $C$ contains a single vertex without any loop, hence $\rho_C(\Sigma)=0$, and the above inequality trivially holds with $m_i=1$. (In fact, $m_i$ can be set to any value, we set $m_i=1$ for simplicity.)

	As $\rho(\Sigma)=\max_C \rho_C(\Sigma)$, we obtain the conclusion
	\[
		\rho(\Sigma)\le\max_i \sqrt[m_i]{\left(\frac{UD}{V}\right)^{3D^2}\|\Sigma^{m_i}\|_{i,i}}.\qedhere
	\]
\end{proof}

\section{Proof of Lemma \ref{lem:not-too-different}}
\label{sec:key-lemma}
It suffices to consider only $i$ for which the set $\{n: \|\Sigma^n\|_{i,i}>0\}$ is nonempty.
To prove Lemma \ref{lem:not-too-different}, we need the following lemma.
\begin{lemma}
\label{lem:avail-for-large}
Let $d=\gcd\{n: (\Sigma^n)_{i,i}>0\}$. There exists $N$ so that $\|\Sigma^n\|_{i,i}>0$ for every $n\ge N$ with $d|n$. 
In particular, one can set $N=(D-1)(2D-1)$.
\end{lemma}

Note that Lemma \ref{lem:avail-for-large} is asymptotically optimal in the worst case. For example, if the dependency graph is composed of only two disjoint cycles around $i$ of lengths $\ell_1,\ell_2$ so that $\ell_1,\ell_2$ are not too much distant with $\gcd(\ell_1,\ell_2)=1$, then the smallest number in the place of $N$ would be $(\ell_1-1)(\ell_2-1)$. (We remind that the Frobenius number\footnote{The Frobenius number of positive integers $p_1,\dots,p_k$ with $\gcd(p_1,\dots,p_k)=1$ is the largest integer that cannot be expressed as a linear combination of $p_1,\dots,p_k$ with nonnegative coefficients.} of $x,y$ is $(x-1)(y-1)-1$.) An example of $\ell_1,\ell_2$ when $D=2k$ is $\ell_1=k+1$ and $\ell_2=k$ (note that $\ell_1+\ell_2=D+1$). When $D$ is odd, we leave a vertex isolated and proceed with the even number of remaining vertices.

To prove Lemma \ref{lem:avail-for-large}, we need some preliminary results.

Two subwalks in a walk are said to be \emph{disjoint} if the only vertex that they possibly share is a common endpoint.
\begin{observation}
\label{obs:less-than-2D}
If a walk does not contain two disjoint circuits, then its length is less than $2D$.
\end{observation}
\begin{proof}
    Let the walk be $v_0,\dots,v_k$. If the vertices are all distinct, then $k<D$ and we are done. Otherwise, let $v_j$ be the first repeated vertex, that is, $j$ is the smallest number so that there exists $i<j$ with $v_i=v_j$. It follows that $v_0,\dots,v_{j-1}$ are distinct. Suppose the walk does not contain two disjoint circuits. This mean there is no circuit in the walk $v_j,\dots,v_k$, that is, these vertices are distinct. It follows that $k < 2D$.
\end{proof}

We also need Schur's lemma\footnote{The lemma is due to Schur in 1935 but was not published until 1942 by Brauer in \cite{brauer1942problem}.} that gives a bound on the Frobenius number.
\begin{lemma}[Schur 1935 \cite{brauer1942problem}]
Let $2\le p_1<p_2<\dots<p_k$ be $k$ integers such that $\gcd(p_1,\dots,p_k)=1$, then every integer $n\ge(p_1-1)(p_k-1)$ can be expressed as a linear combination of $p_1,\dots,p_k$ with nonnegative coefficients.
\end{lemma}

We can now prove Lemma \ref{lem:avail-for-large}.
\begin{proof}[Proof of Lemma \ref{lem:avail-for-large}]
Denote $S=\{n: \|\Sigma^n\|_{i,i}>0\}$ and $d=\gcd S$. Let $m$ be the smallest element of $S$ such that the set $S^*=\{n\in S: n\le m\}$ satisfies $\gcd\,S^*=d$. 

We prove that $m<2D$. Indeed, suppose $m\ge 2D$.
Due to the minimality of $m$, we have $d^*=\gcd\,(S^*\setminus\{m\}) > d$ and $d^*$ does not divide $m$.
As $\|\Sigma^m\|_{i,i}>0$, there is a circuit from $i$ to $i$ of length $m$. This circuit contains $2$ disjoint subcircuits, by Observation \ref{obs:less-than-2D}. 
Let $a$ and $b$ be the lengths of the two subcircuits. Removing any of these subcircuits or both results in a circuit of length less than $m$, which is divisible by $d^*$. In other words, $d^* \mid m-a$, $d^* \mid m-b$ and $d^*\mid m-a-b$. It implies that $d^*\mid (m-a) + (m-b) - (m-a-b) = m$, contradiction.

Let $T=\{n/d: n\in S^*\}$, we have $\gcd\,T=1$.
As $\min S^*\le D$ (a minimal circuit) and $\max S^*\le m<2D$, we have $\min T\le D/d$ and $\max T\le 2D/d$.
	It follows that $\|\Sigma^n\|_{i,i} > 0$ for every $n\ge d(\frac{D}{d}-1)(\frac{2D}{d}-1)$ and $n$ divisible by $d$, by Schur's lemma. The conclusion follows as $d(\frac{D}{d}-1)(\frac{2D}{d}-1)\le (D-1)(2D-1)$.
\end{proof}

Finally comes the proof of Lemma \ref{lem:not-too-different}.
\subsection*{Proof of Lemma \ref{lem:not-too-different}}
Denote $d=\gcd\{t: \|\Sigma^t\|_{i,i}>0\}$.

Denote $N=(D-1)(2D-1)$. By Lemma \ref{lem:avail-for-large}, for every $t\ge N$, if $d|t$ then $\|\Sigma^t\|_{i,i}>0$. 

Let $S_t=\{j: \|\Sigma^t\|_{j,i}>0\}$ for each $t\ge 1$, that is the set of vertices from which we can reach $i$ by a walk of length $t$.
Let $S=\bigcup_{t: d|t} S_t$, that is the set of vertices from which we can reach $i$ by a walk of some length divisible by $d$.

Let $M= N+D^2$. We have the following observation.
\begin{claim*}
    $S_t=S$ for every $t\ge M$ with $d|t$.
\end{claim*}
\begin{proof}
    For each $j\in S$, let $ud$ be the least multiple of $d$ such that $\|\Sigma^{ud}\|_{j,i}>0$. We have $u\le D$. Indeed, suppose $u>D$. Consider the path $v_0,\dots,v_{ud}$ from $v_0=j$ to $v_{ud}=i$. The vertices $v_{kd}$ for $k=0,\dots,u$ are not all distinct since $u>D$, say $v_{k'd}=v_{k''d}$. Contracting the subpath $v_{k'd},\dots,v_{k''d}$ from the path $v_0,\dots,v_{ud}$ gives a path from $j$ to $i$ whose length is divisible by $d$ but less than $ud$, contradiction.
    
	Consider any $t=vd\ge M$. It follows from $M=N+D^2\ge N+ud$ that $t-ud=vd-ud\ge N$, which implies $\|\Sigma^{vd-ud}\|_{i,i} > 0$ by Lemma \ref{lem:avail-for-large}. That is $\|\Sigma^t\|_{j,i} \ge \|\Sigma^{ud}\|_{j,i} \|\Sigma^{vd-ud}\|_{i,i} > 0$, i.e. the vertex $j$ is also in $S_t$.
\end{proof}

	Note that $M=N+D^2=(D-1)(2D-1)+D^2=3D^2-3D+1$, for which one can observe
	\[
		M+d=3D^2-3D+1+d\le 3D^2-2D+1.
	\]
	We consider the case $\|\Sigma^m\|_{i,i}>0$ and $\|\Sigma^n\|_{i,i}>0$ only, as the conclusion is trivial otherwise. It follows that $d$ divides both $m,n$. Cases regarding the magnitude of $m,n$ are analyzed as follows:
	\begin{itemize}
		\item Suppose $n\ge M+d$ and $m> d$. It follows that there exists a greatest positive integer $t<\min\{m,n\}$ with $n-t\ge M$ and $d|t$. Such a number $t$ exists because $t=d$ is a satisfying number. Since $d$ divides $m-t$, we have
	\begin{align*}
		\|\Sigma^m\|_{i,i} &\le \sum_{j\in S} \|\Sigma^t\|_{i,j} \|\Sigma^{m-t}\|_{j,i} \\
		&\le \sum_{j\in S} \|\Sigma^t\|_{i,j} \|\Sigma^{n-t}\|_{j,i} \frac{\max_{j\in S} \|\Sigma^{m-t}\|_{j,i}}{\min_{j\in S} \|\Sigma^{n-t}\|_{j,i}} \\
		&\le  \frac{\max_{j\in S} \|\Sigma^{m-t}\|_{j,i}}{\min_{j\in S} \|\Sigma^{n-t}\|_{j,i}} D \|\Sigma^n\|_{i,i}.
	\end{align*}
Note that denominator is positive as $n-t\ge M$ and $d\mid n-t$.

			Since $m-t$ and $n-t$ are bounded (as $t$ is chosen to be the greatest satisfying number), the ratio $\max_{j\in S} \|\Sigma^{m-t}\|_{j,i}/\min_{j\in S} \|\Sigma^{n-t}\|_{j,i}$ is bounded, and so is the ratio $\|\Sigma^m\|_{i,i}/\|\Sigma^n\|_{i,i}$.

			Suppose $n-m\le M$. Now comes the explicit bound (the situation $m\ge n$ is included in this case). We have $n-t\le M+d$. Indeed, if $n-t > M+d$, then $t+d < n-M \le m$, $t+d<n-M<n$ and $n-(t+d) > M$, a contradiction to the maximality of $t$ (since $t+d$ is a larger satisfying number than $t$). It follows that $n-t\le M+d\le 3D^2 - 2D + 1$.
Therefore, 
\[
	\begin{multlined}
	\frac{\|\Sigma^m\|_{i,i}}{\|\Sigma^n\|_{i,i}} \le D\frac{\max_{j\in S} \|\Sigma^{m-t}\|_{j,i}}{\min_{j\in S} \|\Sigma^{n-t}\|_{j,i}} \le D \frac{\frac{1}{D}(UD)^{m-t}}{V^{n-t}} \\
		= \left(\frac{UD}{V}\right)^{n-t} (UD)^{m-n}\le \left(\frac{UD}{V}\right)^{3D^2-2D+1} (UD)^{m-n}.
	\end{multlined}
\]

\item
It remains to consider the case 
   we do not have both $m>d$ and $n\ge M+d$. 
			If $m=d$ then $\|\Sigma^m\|_{i,i}/\|\Sigma^n\|_{i,i}$ is bounded as the range of $n$ is bounded when $m-n$ is bounded (we do not need an explicit bound here as $m<n$). If $n< M+d\le 3D^2-2D+1$, then regardless of $m$ we still have
\[
	\frac{\|\Sigma^m\|_{i,i}}{\|\Sigma^n\|_{i,i}} \le \frac{\frac{1}{D} (UD)^m}{V^n} \le \left(\frac{UD}{V}\right)^n (UD)^{m-n} \le \left(\frac{UD}{V}\right)^{3D^2-2D+1} (UD)^{m-n}.
\]
	\end{itemize}
The conclusion follows from the verification in both cases.

\section{Proof of Theorem \ref{thm:formula}}
\label{sec:formula}
If $\rho(\Sigma)=0$, then there is no cycle in the dependency graph, hence the theorem trivially holds. We assume $\rho(\Sigma)>0$.

Denote
\[
    \lambda=\sup_n\max_i\sqrt[n]{\|\Sigma^n\|_{i,i}}.
\]
We have
\[
    \rho(\Sigma)=\lim_{n\to\infty} \sqrt[n]{\|\Sigma^n\|}\ge\lambda,
\]
since $\|\Sigma^{tn}\|\ge \|\Sigma^{tn}\|_{i,i}\ge (\|\Sigma^n\|_{i,i})^t$ for any $t,n$. (The latter inequality is obtained by induction with $\|\Sigma^{tn}\|_{i,i}\ge \|\Sigma^{(t-1)n}\|_{i,i} \|\Sigma^{n}\|_{i,i}$.)

To finish the proof, it suffices to prove the existence of $\beta,r$ so that for every $n$,
\begin{equation} \label{eq:polynomial-upper-bound}
	\|\Sigma^n\|\le\beta n^r\lambda^n.
\end{equation}
Indeed, suppose we have \eqref{eq:polynomial-upper-bound}. We then have
\[
    \lambda\le\rho(\Sigma)=\lim_{n\to\infty} \sqrt[n]{\|\Sigma^n\|}\le \lim_{n\to\infty} \sqrt[n]{\beta n^r\lambda^n}=\lambda.
\]
We obtain the equality $\rho(\Sigma)=\lambda$, which is the formula \eqref{eq:the-formula} of Theorem \ref{thm:formula}. The bounds \eqref{eq:weak-bounds} of Theorem \ref{thm:formula} also follow, since
\[
    \|\Sigma^n\|\le\beta n^r\lambda^n=\beta n^r\rho(\Sigma)^n,
\]
while the lower bound $\|\Sigma^n\|\ge \alpha \rho(\Sigma)^n$ for $\alpha=\frac{1}{D}$ is due to by \eqref{eq:lower-bound-maximum-norm} (in the introduction).

The reduction to \eqref{eq:polynomial-upper-bound} is clarified. We now prove \eqref{eq:polynomial-upper-bound}.

We begin with an observation: There exists some $K_0$ so that $\|\Sigma^n\|_{i,j}\le K_0 \lambda^n$ for every $n$ and every $i,j$ in a regular component. Indeed, let $\delta$ be the distance from $j$ to $i$ in the dependency graph, we have
\begin{equation}\label{eq:same-component}
	\|\Sigma^n\|_{i,j}=\frac{1}{\|\Sigma^\delta\|_{j,i}} \|\Sigma^n\|_{i,j} \|\Sigma^\delta\|_{j,i} \le \frac{1}{\|\Sigma^\delta\|_{j,i}} \|\Sigma^{n+\delta}\|_{i,i}\le \frac{1}{\|\Sigma^\delta\|_{j,i}} \lambda^{n+\delta}\le K_0\lambda^n,
\end{equation}
where
\[
	K_0=\max_{j',i'} \frac{\lambda^{\delta'}}{\|\Sigma^{\delta'}\|_{j',i'}} 
\]
over all pairs $j',i'$ so that there is path from $j'$ to $i'$, and the distance from $j'$ to $i'$ is $\delta'$.

Note that if the component is single (containing only one vertex $i$ without any loop), then the inequality $\|\Sigma^n\|_{i,i}\le K_0 \lambda^n$ in \eqref{eq:same-component} still trivially holds with $\|\Sigma^n\|_{i,i}=0$ for every $n$.

Now we may wonder what would be the inequality when $i,j$ are not in the same component. 

The condensation\footnote{The condensation of a directed graph $G$ is the directed graph whose vertices are the strongly connected components of $G$ and there is an edge from $U$ to $V$ if there is an edge from $u$ to $v$ in $G$ with $u\in U$ and $v\in V$.} of the dependency graph gives us the relation between the components. We denote by $\Delta(i,j)$ the distance from the component of $i$ to the component of $j$. (For $i,j$ in the same component, we let $\Delta(i,j)=0$, and we do not consider $i,j$ with no path from $i$ to $j$.)
For any $\delta$, we also denote
\[
	\|\Sigma^n\|_\delta=\max_{i,j:\ \Delta(i,j) \le \delta} \|\Sigma^n\|_{i,j}.
\]

What we have shown in \eqref{eq:same-component} is actually $\|\Sigma^n\|_0\le K_0 \lambda^n$. It is the base case of the following claim.

\begin{claim}
    For every $\delta$, there exist a positive constant $K_\delta$ and a nonnegative number $r_\delta$ so that for every $n$,
    \[
    	\|\Sigma^n\|_\delta\le K_\delta n^{r_\delta}\lambda^n.
    \]
\end{claim}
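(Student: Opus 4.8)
The plan is to prove the claim by induction on $\delta$, with the base case $\delta=0$ already supplied by \eqref{eq:same-component}, which gives $\|\Sigma^n\|_0\le K_0\lambda^n$, so one may take $r_0=0$. For the inductive step I assume the bound $\|\Sigma^m\|_{\delta-1}\le K_{\delta-1}m^{r_{\delta-1}}\lambda^m$ for all $m$ and aim to control $\|\Sigma^n\|_{i,j}$ for a fixed pair $i,j$ with $\Delta(i,j)\le\delta$. If $\Delta(i,j)\le\delta-1$ the inductive hypothesis already applies, so the only case to treat is $\Delta(i,j)=\delta\ge1$, where $i$ and $j$ lie in distinct components.

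The combinatorial heart is a \emph{last-exit decomposition}. Since the condensation is acyclic, a walk cannot return to a component once it has left it; hence every walk from $i$ to $j$ (which must eventually leave $C_i$, as $j\notin C_i$) has a unique last step $s+1$ at which it departs $C_i$. Reading the entry $(A_1\cdots A_n)_{i,j}$ as a sum over length-$n$ walks and grouping the walks by this exit step, I obtain, for any fixed tuple of matrices, an exact factorisation of each group into a sub-walk from $i$ to some $k\in C_i$ of length $s$, a single crossing edge $k\to\ell$ with $\ell\notin C_i$, and a tail from $\ell$ to $j$ of length $n-s-1$. Taking the maximum over the matrices and bounding each factor separately (legitimate since all entries are nonnegative) yields
\[
\|\Sigma^n\|_{i,j}\le\sum_{s=0}^{n-1}\sum_{k\in C_i,\ \ell\notin C_i}\|\Sigma^s\|_{i,k}\cdot U\cdot\|\Sigma^{n-s-1}\|_{\ell,j},
\]
where $U$ is the largest entry occurring in $\Sigma$. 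The first factor is bounded by $K_0\lambda^s$ through the base case, because any walk from $i$ to $k$ stays inside the shared component $C_i$; the tail is bounded by the inductive hypothesis once one knows $\Delta(\ell,j)\le\delta-1$.

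The estimate then finishes by counting. There are at most $D^2$ choices of the pair $(k,\ell)$ and exactly $n$ choices of the exit step $s$, and $\sum_{s=0}^{n-1}(n-s-1)^{r_{\delta-1}}\le n\cdot n^{r_{\delta-1}}$, so the whole sum is at most $\const\, n^{\,r_{\delta-1}+1}\lambda^{n-1}=\const\, n^{\,r_{\delta-1}+1}\lambda^{n}$. Thus the induction closes with $K_\delta$ the accumulated constant and $r_\delta=r_{\delta-1}+1=\delta$, and choosing $\delta$ large enough that every reachable pair satisfies $\Delta(i,j)\le\delta$ gives $\|\Sigma^n\|_\delta=\|\Sigma^n\|$ and hence \eqref{eq:polynomial-upper-bound}. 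I expect the genuine difficulty to be twofold. First, one must resist treating $\|\Sigma^n\|_{i,j}$ as the weight of a single optimal walk: it is a \emph{sum} over walks, and it is precisely the summation over the $n$ possible exit steps that manufactures the extra factor $n$, the same mechanism that produces the entry $n\lambda^{n-1}$ in a Jordan block. Second, and more delicately, the reduction needs $\Delta(\ell,j)\le\delta-1$ for \emph{every} component reachable by a single crossing edge out of $C_i$; guaranteeing this forces the distance $\Delta$ to be measured so that it strictly drops across each inter-component edge (the longest-path depth in the condensation), and checking that this is the correct quantity, rather than a shortest-path distance that a detour through extra components could leave unchanged, is the crux of the argument.
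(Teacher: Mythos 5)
Your argument is correct, but it follows a genuinely different route from the paper. The paper fixes an arbitrary midpoint, writes $\|\Sigma^n\|_{i,j}\le d\max_k\|\Sigma^{\lfloor n/2\rfloor}\|_{i,k}\|\Sigma^{\lceil n/2\rceil}\|_{k,j}$, and runs a double induction (on $\delta$, then on $\lceil\log n\rceil$) with three cases according to whether the maximizing $k$ shares a component with $i$, with $j$, or with neither; this yields the recursion $r_\delta=2r_{\delta-1}+\log H$. You instead split at the unique component-crossing step, which is canonical because the condensation is acyclic, and sum over its $n$ possible positions; the exact factorisation into a head inside $C_i$, one crossing edge, and a tail is valid, the head is controlled by the $\delta=0$ case, and the tail by the inductive hypothesis. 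This closes with a single induction on $\delta$ and gives $r_\delta=\delta$, a sharper polynomial degree than the paper's (which grows exponentially in $\delta$), and it matches the Jordan-block heuristic you cite. Your final worry about the distance is well placed and is in fact the one point where you must depart from the paper's literal definition: for the tail estimate you need $\Delta(\ell,j)\le\delta-1$ for \emph{every} component entered by the crossing edge, which holds for the longest-path depth in the condensation but not for the shortest-path distance (a crossing edge into a component off every shortest path need not decrease the shortest-path distance to $C_j$). Since the longest-path depth is still bounded by $d$, the claim and \eqref{eq:polynomial-upper-bound} follow unchanged; note that the paper's own third case ("both $\Delta(i,k)$ and $\Delta(k,j)$ are at most $\delta-1$") tacitly relies on the same convention. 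Two trivial loose ends you should state: absorbing $\lambda^{-1}$ into the constant at the end requires $\lambda>0$, which holds because $\rho(\Sigma)>0$ forces a cycle in the dependency graph and hence some $\|\Sigma^c\|_{i,i}>0$; and the boundary terms $s=0$ and $s=n-1$ involve $\|\Sigma^0\|=I$, which should be handled by taking $K_0\ge1$ and reading $n^{r}$ rather than $(n-s-1)^{r}$ throughout the sum, as you already do.
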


\begin{proof}
    By the induction method, since we have established the claim for $\delta=0$ with $K_0$ in \eqref{eq:same-component} and $r_0=0$, it remains to show the claim for any $\delta$, given that it holds for $\delta'=\delta-1$ (with the corresponding numbers $K_{\delta'}, r_{\delta'}$). 
    
    Let $\gamma=\max\{\frac{\|\Sigma^1\|}{\lambda}, D(K_{\delta'})^2\}$ and $H=\max\{1,K_0D, 2^{2r_{\delta'}}\}$. It suffices to show that for every $n$,
    \begin{equation} \label{eq:explicit-form}
        \|\Sigma^n\|_{\delta} \le \gamma H^{\lceil \log n\rceil} \lambda^n,
    \end{equation}
    since $H^{\lceil \log n\rceil} \le H^{1+\log n}= HH^{\log n} = Hn^{\log H}$ (note that $H\ge 1$),    
    which implies
    \[
        \|\Sigma^n\|_{\delta} \le \gamma Hn^{\log H} \lambda^n.
    \]
   (In other words, this is the claim for $\delta$ with $K_\delta=\gamma H$ and $r_\delta=\log H$.)
   
    In order to prove \eqref{eq:explicit-form}, we again use another induction on $\lceil \log n\rceil$ as follow (i.e. double induction, first on $\delta$, then on $\lceil \log n\rceil$). (Notation $\log n$ here denotes the logarithm of base $2$.) 
    At first, the base case trivially holds for those $n$ with $\lceil \log n\rceil = 0$, i.e. $n=1$. That is because $\|\Sigma^1\|_\delta\le \|\Sigma^1\|=\frac{\|\Sigma^1\|}{\lambda} \lambda \le \gamma\lambda$ while $H^{\lceil \log n\rceil}=1$.
    
    We assume \eqref{eq:explicit-form} holds for any number $n'$ so that $\lceil \log n'\rceil < \lceil \log n\rceil$ and proves it also holds for $n$. Let $n=\ell+m$ where $\ell=\lfloor n/2\rfloor$ and $m=\lceil n/2\rceil$. Suppose $\|\Sigma^n\|_\delta=\|\Sigma^n\|_{i,j}$ for some $i,j$. We have
    \[
	    \|\Sigma^n\|_{i,j}\le\sum_{k'} \|\Sigma^\ell\|_{i,k'} \|\Sigma^m\|_{k',j}\le D \|\Sigma^\ell\|_{i,k} \|\Sigma^m\|_{k,j}
    \]
    for some $k$ that maximizes $\|\Sigma^\ell\|_{i,k} \|\Sigma^m\|_{k,j}$.
   
	We consider three cases regarding $k$:
	\begin{itemize}
		\item If $i,k$ are in the same component then $\|\Sigma^\ell\|_{i,k}\le K_0 \lambda^\ell$ by \eqref{eq:same-component}, and $\|\Sigma^m\|_{k,j}\le \|\Sigma^m\|_\delta \le\gamma H^{\lceil \log m\rceil} \lambda^m$ by the induction hypothesis on $\lceil \log n\rceil$, since $\lceil \log m\rceil =\lceil \log \lceil n/2\rceil \rceil = \lceil \log n\rceil - 1$. It follows that
    \[
    	\|\Sigma^n\|_{i,j}\le D K_0 \lambda^\ell \gamma H^{\lceil \log m\rceil} \lambda^m \le H\gamma H^{\lceil \log m\rceil} \lambda^n = \gamma H^{\lceil \log n\rceil} \lambda^n,
    \]
    where the latter inequality is due to $K_0 D\le H$.
    
    \item If $k,j$ are in the same component then likewise we have
    \[
    	\|\Sigma^n\|_{i,j}\le D \gamma H^{\lceil \log \ell\rceil} \lambda^\ell  K_0 \lambda^m \le \gamma H^{\lceil \log n\rceil} \lambda^n.
    \]
    Note that $H H^{\lceil \log \ell\rceil}\le H H^{\lceil \log m\rceil} = H^{\lceil \log n\rceil}$ since $H\ge 1$ (the inequality may be strict, say when $\lceil \log \ell\rceil < \lceil \log m\rceil$, e.g. $n=2^t+1$, for which we need $H\ge 1$).
    
    \item If $k$ is not in the same component with either $i$ or $j$, then both $\Delta(i,k)$ and $\Delta(k,j)$ are at most $\delta'=\delta-1$. It follows that $\|\Sigma^\ell\|_{i,k}\le\|\Sigma^\ell\|_{\delta'}\le K_{\delta'} \ell^{r_{\delta'}}\lambda^\ell$ and $\|\Sigma^m\|_{k,j}\le\|\Sigma^m\|_{\delta'}\le K_{\delta'} m^{r_{\delta'}}\lambda^m$ by the induction hypothesis on $\delta$. We have
	    \[
            \|\Sigma^n\|_{i,j}\le D K_{\delta'} \ell^{r_{\delta'}}\lambda^\ell K_{\delta'} m^{r_{\delta'}}\lambda^m\le D (K_{\delta'})^2 n^{2r_{\delta'}} \lambda^n \le \gamma H^{\lceil \log n\rceil} \lambda^n,
    \]
    where the last inequality is due to the condition $\gamma\ge D(K_{\delta'})^2$ and $H^{\lceil\log n\rceil}\ge H^{\log n} = n^{\log H} \ge n^{2r_{\delta'}}$ (note that we have the conditions $H\ge 1$ and $H\ge 2^{2r_{\delta'}}$).
	\end{itemize}

    We have verified \eqref{eq:explicit-form} by induction on $\lceil \log n\rceil$. The claim follows, by induction on $\delta$.
\end{proof}

Since every $\Delta(i,j)$ is less than $D$, it follows that $\|\Sigma^n\|=\|\Sigma^n\|_D$. Therefore, the claim implies \eqref{eq:polynomial-upper-bound}, which in turn finishes the proof of Theorem \ref{thm:formula}.

\section{Equivalence of Theorem \ref{thm:jsr-bound} and Theorem \ref{thm:using-norm} up to a constant using Lemma \ref{lem:not-too-different}}
\label{sec:duality}
We present a way to deduce each of Theorems \ref{thm:jsr-bound} and \ref{thm:using-norm} from the other, using Lemma \ref{lem:not-too-different}. However, the version of Theorem \ref{thm:using-norm} that is deduced from Theorem \ref{thm:jsr-bound} is obtained with the weaker constant $\left(\frac{V}{UD}\right)^{3D^2}$ instead of $\left(\frac{V}{UD}\right)^D$.

Given some $i$ in some component $C$, suppose $\|\Sigma^n\|_C=\|\Sigma^n\|_{j,k}$, and let $\ell_1,\ell_2$ be the distance from $i$ to $j$ and from $k$ to $i$, respectively, we have
\begin{equation}\label{eq:norm-to-diagonal}
	\|\Sigma^n\|_C =\|\Sigma^n\|_{j,k}= \frac{1}{\|\Sigma^{\ell_1}\|_{i,j}\|\Sigma^{\ell_2}\|_{k,i}} \|\Sigma^{\ell_1}\|_{i,j}\|\Sigma^n\|_{j,k}\|\Sigma^{\ell_2}\|_{k,i} \le \frac{1}{V^{\ell_1+\ell_2}}\|\Sigma^{n+\ell_1+\ell_2}\|_{i,i}.
\end{equation}
Note that if $i=j$ (resp. $k=i$), then $\ell_1=0$ (resp. $\ell_2=0$) and we assume $\|\Sigma^0\|_{i,j}=1$ (resp. $\|\Sigma^0\|_{k,i}=1$).

The following proof shares some parts with the proof in Section \ref{sec:jsr-bound}.
\begin{proof}[Deduction of Theorem \ref{thm:jsr-bound} from Theorem \ref{thm:using-norm}]
	As the lower bound of Theorem \ref{thm:jsr-bound} is trivial, we prove the upper bound.

	Fix a regular component $C$ and choose any $i\in C$. Since $\|\Sigma^{m_i}\|_{i,i}>0$, it follows from \eqref{eq:norm-to-diagonal} with $n=m_i$ and Lemma \ref{lem:not-too-different} that
	\begin{align*}
		\|\Sigma^{m_i}\|_C&\le \frac{1}{V^{\ell_1+\ell_2}} (UD)^{\ell_1+\ell_2} \left(\frac{UD}{V}\right)^{3D^2-2D+1} \|\Sigma^{m_i}\|_{i,i} \\
	&\le \left(\frac{UD}{V}\right)^{3D^2-2D+1+\ell_1+\ell_2} \|\Sigma^{m_i}\|_{i,i} \\
	&\le \left(\frac{UD}{V}\right)^{3D^2-1} \|\Sigma^{m_i}\|_{i,i},
\end{align*}
since $\ell_1\le D-1, \ell_2\le D-1$.

	Multiplying by $D$ and taking the root, we obtain
	\[
		\sqrt[m_i]{D\|\Sigma^{m_i}\|_C} \le \sqrt[m_i]{D\left(\frac{UD}{V}\right)^{3D^2-1}\|\Sigma^{m_i}\|_{i,i}}\le \sqrt[m_i]{\left(\frac{UD}{V}\right)^{3D^2}\|\Sigma^{m_i}\|_{i,i}}.
	\]

    If $C$ is a single component, that is $C$ contains a single vertex $i$ with no loop, then the above inequality trivially holds with all sides being zeros.

	Since $\rho(\Sigma) \le \sqrt[n]{D \max_C \|\Sigma^n\|_C}$ by Theorem \ref{thm:using-norm}, it follows that
	\[
		\rho(\Sigma) \le \max_i \sqrt[m_i]{\left(\frac{UD}{V}\right)^{3D^2}\|\Sigma^{m_i}\|_{i,i}}.\qedhere
	\]
\end{proof}

\begin{proof}[Deduction of Theorem \ref{thm:using-norm} (with a weaker constant) from Theorem \ref{thm:jsr-bound}]
	As the upper bound of Theorem \ref{thm:using-norm} is trivial, we prove the lower bound. We start with \eqref{eq:norm-to-diagonal}:
	\[
		\|\Sigma^n\|_C\le\frac{1}{V^{\ell_1+\ell_2}}\|\Sigma^{n+\ell_1+\ell_2}\|_{i,i}.
	\]

	Suppose $C$ is regular, which means both sides of the above inequality are positive. Let $\delta\le D$ be the length of the shortest cycle from $i$ to $i$. Since $\|\Sigma^{t\delta}\|_{i,i}>0$ for any $t\ge 1$, two consecutive elements in the set $\{\ell:\|\Sigma^\ell\|_{i,i}>0\}$ have the distance at most $\delta\le D$. Therefore, if $n+\ell_1+\ell_2 > D$, there exists some positive integer $m<n+\ell_1+\ell_2$ with $n+\ell_1+\ell_2-m\le D$ so that $\|\Sigma^m\|_{i,i}>0$. We first consider the case $n+\ell_1+\ell_2 > D$, it follows from Lemma \ref{lem:not-too-different} that
	\begin{align*}
		\|\Sigma^n\|_C&\le\frac{1}{V^{\ell_1+\ell_2}} (UD)^{n+\ell_1+\ell_2-m} \left(\frac{UD}{V}\right)^{3D^2-2D+1} \|\Sigma^m\|_{i,i}\\
		&\le (UD)^{n-m} \left(\frac{UD}{V}\right)^{3D^2-2D+1+\ell_1+\ell_2} \|\Sigma^m\|_{i,i}\\
		&\le (UD)^{n-m} \left(\frac{UD}{V}\right)^{3D^2-2D+1+\ell_1+\ell_2} \rho(\Sigma)^m\\
		&\le (UD)^{n-m} \left(\frac{UD}{V}\right)^{3D^2-2D+1+\ell_1+\ell_2} \rho(\Sigma)^n \rho(\Sigma)^{m-n}\\
		&\le \left(\frac{UD}{\rho(\Sigma)}\right)^{n-m} \left(\frac{UD}{V}\right)^{3D^2-2D+1+\ell_1+\ell_2} \rho(\Sigma)^n.
	\end{align*}

	If $n\ge m$, it follows from $\rho(\Sigma)\ge V$ that
	\begin{align*}
		\|\Sigma^n\|_C&\le \left(\frac{UD}{V}\right)^{n-m} \left(\frac{UD}{V}\right)^{3D^2-2D+1+\ell_1+\ell_2} \rho(\Sigma)^n\\
		&= \left(\frac{UD}{V}\right)^{3D^2-2D+1+\ell_1+\ell_2+n-m} \rho(\Sigma)^n\\
		&\le \left(\frac{UD}{V}\right)^{3D^2-D+1} \rho(\Sigma)^n.\\
	\end{align*}

	When $n<m$, it follows from $\rho(\Sigma)\le UD$ that
	\begin{align*}
		\|\Sigma^n\|_C&\le \left(\frac{UD}{V}\right)^{3D^2-2D+1+\ell_1+\ell_2} \rho(\Sigma)^n\\
		&\le \left(\frac{UD}{V}\right)^{3D^2-1} \rho(\Sigma)^n,
	\end{align*}
	since $\ell_1\le D-1, \ell_2\le D-1$.

	Combining the bounds on $\|\Sigma^n\|_C$ for $n\ge m$ and $n<m$ gives
	\[
		\|\Sigma^n\|_C\le \left(\frac{UD}{V}\right)^{3D^2} \rho(\Sigma)^n.
	\]

	In the remaining case that $n+\ell_1+\ell_2\le D$, we also have
	\[
		\|\Sigma^n\|_C\le\|\Sigma^n\|\le \frac{1}{D} (UD)^n \le \left(\frac{UD}{V}\right)^n V^n \le \left(\frac{UD}{V}\right)^{3D^2} \rho(\Sigma)^n,
	\]
	since $n\le n+\ell_1+\ell_2\le D\le 3D^2$.

	When $C$ is single, the above inequality trivially holds with $\|\Sigma^n\|_C=0$.

	Taking the maximum over all the components, we obtain the conclusion
	\[
		\rho(\Sigma)\ge \sqrt[n]{\left(\frac{V}{UD}\right)^{3D^2}\max_C \|\Sigma^n\|_C}.\qedhere
	\]
\end{proof}

\section{Proof of Theorem \ref{thm:extension-xu}}
\label{sec:extension-xu}
Before proving Theorem \ref{thm:extension-xu}, we give the following lemma, which is an extension of Fekete's lemma for nonnegative sequences.
\begin{lemma}
\label{lem:nonnegative-fekete}
	Given a nonnegative sequence $a_n$ for $n=1,2,\dots$, if the sequence is supermultiplicative, that is, $a_{i+j}\ge a_ia_j$ for every $i,j\ge 1$, then the subsequence of all positive $\sqrt[n]{a_n}$ is either empty or converges to $\sup_n \sqrt[n]{a_n}$.
\end{lemma}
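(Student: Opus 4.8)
The plan is to reduce to the classical Fekete lemma after restricting attention to an arithmetic progression on which the sequence is eventually positive. Write $T=\{n:a_n>0\}$. Supermultiplicativity $a_{m+n}\ge a_m a_n$ shows that $T$ is closed under addition, so if $T=\emptyset$ the positive subsequence is empty and there is nothing to prove. Otherwise let $\delta=\gcd T$. Every element of $T$ is a multiple of $\delta$, and, exactly as in the proof of Proposition \ref{prop:bounded-difference}, the basic number-theoretic fact that an additively closed set of positive integers contains all sufficiently large multiples of its gcd gives an $N$ with $\delta t\in T$ for all $t\ge N$.

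First I would pass to the sequence $b_t=a_{\delta t}$. It is again supermultiplicative, since $b_{s+t}=a_{\delta s+\delta t}\ge a_{\delta s}a_{\delta t}=b_s b_t$, and by the previous paragraph $b_t>0$ for all $t\ge N$. The point of this reduction is that the zeros of the original sequence, which obstruct a direct Fekete argument, now occur only among finitely many initial indices of $(b_t)$.

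Next I would prove the version of Fekete's lemma for eventually positive supermultiplicative sequences: $\lim_{t\to\infty}\sqrt[t]{b_t}=\sup_t\sqrt[t]{b_t}=:M$. The upper bound $\limsup\le M$ is immediate. For the matching lower bound, fix any $m$ with $b_m>0$ and, for large $t$, write $t=qm+s$ choosing the remainder in the window $N\le s<N+m$; then $b_t\ge b_{qm}b_s\ge c\,b_m^{\,q}$, where $c=\min_{N\le s<N+m}b_s>0$, so that $\sqrt[t]{b_t}\ge c^{1/t}b_m^{\,q/t}$. Since $c^{1/t}\to 1$ and $q/t\to 1/m$, we get $\liminf_t\sqrt[t]{b_t}\ge\sqrt[m]{b_m}$, and taking the supremum over all $m$ with $b_m>0$ yields $\liminf\ge M$. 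The windowed choice of remainder in $[N,N+m)$, where positivity is guaranteed, is exactly what sidesteps the zero entries, and is the only delicate point; everything else is the standard computation.

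Finally I would translate back. The positive terms of $(\sqrt[n]{a_n})$ are precisely the numbers $\sqrt[\delta t]{b_t}=(b_t^{1/t})^{1/\delta}$ with $b_t>0$, so by the previous step this subsequence converges to $M^{1/\delta}$. The same identity gives $\sup_n\sqrt[n]{a_n}=\bigl(\sup_t b_t^{1/t}\bigr)^{1/\delta}=M^{1/\delta}$, since $x\mapsto x^{1/\delta}$ is increasing and the indices outside $T$ contribute $0$. Hence the positive subsequence converges to $\sup_n\sqrt[n]{a_n}$, as claimed. (If this supremum is infinite, the same estimates show that the subsequence tends to $\infty$.)
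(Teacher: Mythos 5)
Your proof is correct, but it reaches the conclusion by a somewhat different route from the paper's. You first reduce to the arithmetic progression $\delta\mathbb{Z}$ via the gcd of the support $T=\{n:a_n>0\}$ and the numerical-semigroup fact that an additively closed set of positive integers contains all sufficiently large multiples of its gcd, and then run a Fekete argument for the eventually positive sequence $b_t=a_{\delta t}$, choosing the remainder $s$ in $t=qm+s$ from a window $[N,N+m)$ where positivity is already guaranteed. The paper skips the gcd reduction entirely: for an arbitrary modulus $q$ it takes, in each residue class that meets the support, the \emph{smallest} index $m_r$ with $a_{m_r}>0$ as the correction term, writes $n=pq+m_r$, and gets $a_n\ge (a_q)^p a_{m_r}$ directly; since the finitely many $m_r$ are bounded, this yields $\liminf\ge\sqrt[q]{a_q}$ for every $q$ in one pass. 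Both arguments hinge on the same idea---absorb the zeros into a bounded correction drawn from a finite set of indices where positivity holds---but the paper's choice of representatives makes the semigroup fact unnecessary and keeps the proof in a single layer, whereas your reduction makes the structure of the support explicit (eventually all multiples of $\delta$), which is the viewpoint the paper itself adopts later in Proposition \ref{prop:bounded-difference} and in the extension of Xu's result. One small point to make explicit when you translate back: the finitely many positive terms $b_t$ with $t<N$ belong to the positive subsequence but do not affect its convergence, since convergence is a tail property; with that remark your argument is complete.
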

\begin{proof}
    Suppose the subsequence of all positive $\sqrt[n]{a_n}$ is not empty. Let $m$ be so that $a_m>0$, we have the subsequence of all positive $\sqrt[n]{a_n}$ is infinite ($a_{mt}>0$ for every $t\ge 1$).
    
    It is obvious by definition that $\limsup\{\sqrt[n]{a_n}: a_n>0\} \le \sup_n \sqrt[n]{a_n}$. To finish the proof, it remains to show $\liminf\{\sqrt[n]{a_n}: a_n>0\} \ge \sup_n \sqrt[n]{a_n}$.
    
    Consider any positive integer $q$.
    Let $R$ be the set of integers $r$ ($0\le r< q$) such that there exists some $m_r$ with $m_r\equiv r \pmod{q}$ and $a_{m_r} > 0$. For each $r\in R$, we denote by $m_r$ the smallest such number.
    
    For every $n$ such that $a_n>0$, if $n\equiv r \pmod{q}$, then $r\in R$. Consider the representation $n=pq+m_r$, we obtain 
    \[
        \sqrt[n]{a_n} \ge \sqrt[n]{(a_q)^p a_{m_r}}.
    \]
    The lower bound converges to $\sqrt[q]{a_q}$ when $n\to\infty$ since $m_r$ is bounded.
    
    As the lower bound holds for every $q$, we have shown the lower bound $\sup_n \sqrt[n]{a_n}$ for the limit inferior and finished the proof.
\end{proof}

We can now prove Theorem \ref{thm:extension-xu}
\begin{proof}[Proof of Theorem \ref{thm:extension-xu}]
	It suffices to prove the first equality since the second equality is quite obvious by the fact that $\frac{1}{D} \tr(A)\le \max_i A_{i,i} \le \tr(A)$ for any $D\times D$ nonnegative matrix $A$.

	At first, since the sequence $\{\|\Sigma^n\|_{i,i}\}_n$ for each $i$ is supermultiplicative, it follows from the extension of Fekete's lemma in Lemma \ref{lem:nonnegative-fekete} that the subsequence of all the positive elements is either empty or follows the growth rate 
	\[
		\rho_i=\lim\left\{\sqrt[n]{\|\Sigma^n\|_{i,i}}: \|\Sigma^n\|_{i,i}>0\right\}=\sup_n \sqrt[n]{\|\Sigma^n\|_{i,i}}.
	\]
	In case of emptiness, we still have $\rho_i=0$ as the value of the supremum.

	By Theorem \ref{thm:formula}, we have
	\[
		\rho(\Sigma)=\max_i\rho_i. 
	\]

	Further, by the definition of $\delta_i$, if the set $\{n:\|\Sigma^n\|_{i,i}>0\}$ is nonempty, then for every large enough multiple $n$ of $\delta_i$, we have $\|\Sigma^n\|_{i,i}>0$ (Lemma \ref{lem:avail-for-large} even gives an example of how large $n$ is), that is
	\[
		\rho_i=\lim_{m\to\infty} \sqrt[m\delta_i]{\|\Sigma^{m\delta_i}\|_{i,i}}.
	\]
	When the set is empty, it trivially holds with $\delta_i=1$.

	Since $\Delta$ is a multiple of all $\delta_i$, it follows that 
	\[
		\rho(\Sigma)=\max_i \rho_i=\max_i \lim_{m\to\infty} \sqrt[m\Delta]{\|\Sigma^{m\Delta}\|_{i,i}}=\lim_{m\to\infty} \max_i \sqrt[m\Delta]{\|\Sigma^{m\Delta}\|_{i,i}}.\qedhere
	\]
\end{proof}

\section*{Acknowledgements}
The author would like to thank G\"unter Rote for his various useful remarks. It should be also noted that the anonymous reviewer has given very helpful suggestions, particularly on the presentation of the work.

\bibliographystyle{plain}

\bibliography{diajsr}
\end{document}